\newtheorem{theorem}{Theorem}[section]
\newtheorem{lemma}[theorem]{Lemma}
\theoremstyle{definition}
\newtheorem{definition}[theorem]{Definition}
\theoremstyle{remark}
\numberwithin{equation}{section}
\begin{document}
\title[A Note on Inextensible Flows of Curves in $E_{{}}^{n}$]{A Note on Inextensible Flows of Curves in $E_{{}}^{n}$}
\author[\"{O}nder G\"{o}kmen Y\i ld\i z, Murat Tosun, S\i dd\i ka \"{O}zkald\i \ Karaku\c{s}]{\"{O}nder G\"{o}kmen Y\i ld\i z, Murat Tosun, S\i dd\i ka \"{O}zkald\i \ Karaku\c{s}}

\address{$^{1}$  Bilecik University, Faculty of Sciences and Arts, Department of Mathematics, 11210, Bilecik, Turkey.}

\email{ogokmen.yildiz@bilecik.edu.tr}
\email{siddika.karakus@bilecik.edu.tr}

\address{$^{2}$ Sakarya, Faculty of Sciences and Arts, Department of Mathematics, Sakarya, Turkey}
\email{tosun@sakarya.edu.tr}

\thanks{2000 \textit{Mathematics Subject Classification:} 53C44, 53a04, 53A05, 53A35.}

\begin{abstract}
In this paper, we investigate the general formulation for inextensible flows
of curves in $E_{{}}^{n}$. The necessary and sufficient conditions for
inextensible curve flow are expressed as a partial differential equation
involving the curvatures.

\end{abstract}
\keywords{Curvature flows, inextensible, Euclidean n-space.}
\maketitle

\section{Introduction}

It is well known that many nonlinear phenomena in physics, chemistry and
biology are described by dynamics of shapes, such as curves and surfaces. The
evolution of curve and surface has significant applications in computer vision
and image processing. The time evolution of a curve or surface generated by
its corresponding flow in -for this reason we shall also refer to curve and
surface evolutions as flows throughout this article- is said to be
inextensible if, in the former case, its arclength is preserved, and in the
latter case, if its intrinsic curvature is preserved \cite{kwon}. Physically,
inextensible curve flows give rise to motions in which no strain energy is
induced. The swinging motion of a cord of fixed length, for example, or of a
piece of paper carried by the wind, can be described by inextensible curve and
surface flows. Such motions arise quite naturally in a wide range of a
physical applications. For example, both Chirikjian and Burdick
\cite{chirikjian} and Mochiyama et al. \cite{mochiyama} study the shape
control of hyper-redundant, or snake-like robots.

Inextinsible curve and surface flows also arise in the context of many
problems in computer vision \cite{kass}\cite{lu} and computer animation
\cite{desbrun}, and even structural mechanics \cite{unger}.

There have been a lot of studies in the literature on plane curve flows,
particularly on evolving curves in the direction of their curvature vector
field (referred to by various names such as \textquotedblleft curve
shortening\textquotedblright, flow by curvature\textquotedblright \ and
\textquotedblleft heat flow\textquotedblright). Particularly relevant to this
paper are the methods developed by Gage and Hamilton \cite{gage} and Grayson
\cite{grayson} for studying the shrinking of closed plane curves to circle via
heat equation.

In this paper, we develop the general formulation for inextensible flows of
curves in $E^{n\text{ }}$. Necessary and sufficient conditions for an
inextensible curve flow are expressed as a partial differential equation
involving the curvatures.

\section{Preliminary}

To meet the requirements in the next sections, the basic elements of the
theory of curves in the Euclidean n-space $E^{n}$ are briefly presented in
this section (A more complete elementary treatment can be found in
\cite{glunk}\cite{hhh}).

Let $\alpha:I\subset R\mathbb{\longrightarrow}E^{n\text{ }}$be an arbitrary
curve in $E^{n\text{ }}$ Recall that the curve $\alpha$ is said to be a unit
speed curve (or parameterized by arclength functions) if $\left \langle
\alpha^{\prime}(s),\alpha^{\prime}(s)\right \rangle =1$, where $\left \langle
.,.\right \rangle $\ denotes the standard inner product of given by%
\[
\left \langle X,Y\right \rangle =%
%TCIMACRO{\dsum \limits_{i=1}^{n}}%
%BeginExpansion
{\displaystyle \sum \limits_{i=1}^{n}}
%EndExpansion
x_{i}y_{i}%
\]

for each $X=\left(  x_{1},x_{2},...,x_{n}\right)  ,Y=\left(  y_{1}%
,y_{2},...,y_{n}\right)  \in R^{n}$. In particular, norm of a vector $X\in
R^{n}$ is given by $\left \Vert X\right \Vert =\sqrt{\left \langle
X,Y\right \rangle }$. Let $\left \{  V_{1},V_{2},...,V_{n}\right \}  $\ be the
moving Frenet frame along the unit speed curve $\alpha$, where $V_{i}\left(
i=1,2,...,n\right)  $\ denotes the $i^{th}$ Frenet vector field. Then Frenet
formulas are given by%
\[
\left[
\begin{array}
[c]{c}%
V_{1}^{\prime}\\
V_{2}^{\prime}\\
V_{3}^{\prime}\\
\vdots \\
V_{n-2}^{\prime}\\
V_{n-1}^{\prime}\\
V_{n}^{\prime}%
\end{array}
\right]  =\left[
\begin{array}
[c]{cccccccc}%
0 & k_{1} & 0 & 0 & \cdots & 0 & 0 & 0\\
-k_{1} & 0 & k_{2} & 0 & \cdots & 0 & 0 & 0\\
0 & -k_{2} & 0 & k_{3} & \cdots & 0 & 0 & 0\\
\vdots & \vdots & \vdots & \vdots & \vdots & \vdots & \vdots & \vdots \\
0 & 0 & 0 & 0 & \cdots & 0 & k_{n-2} & 0\\
0 & 0 & 0 & 0 & \cdots & -k_{n-2} & 0 & k_{n-1}\\
0 & 0 & 0 & 0 & \cdots & 0 & -k_{n-1} & 0
\end{array}
\right]  \left[
\begin{array}
[c]{c}%
V_{1}\\
V_{2}\\
V_{3}\\
\vdots \\
V_{n-2}\\
V_{n-1}\\
V_{n}%
\end{array}
\right]
\]

where $k_{i}\left(  i=1,2,...,n\right)  $ denotes the $i^{th}$ curvature
function of the curve \cite{glunk}\cite{hhh}. If all of the curvatures
$k_{i}\left(  i=1,2,...,n\right)  $ of the curve vanish nowhere in $I\subset
R$, it is called a non-degenerate curve.

\section{Inextinsible Flows of Curve in $E^{n}$}

Throughout this paper, we suppose that
\[
\alpha:\left[  0,l\right]  \times \left[  0,w\right)  \mathbb{\longrightarrow
}E^{n\text{ }}%
\]
is a one parameter family of smooth curves in $E^{n\text{ }}$, where $l$\ is
the arclength of the initial curve. Let $u$\ be the curve parameterization
variable, $0\leq u\leq l$. If the speed curve $\alpha$\ is denoted by
$v=\left \Vert \frac{d\alpha}{du}\right \Vert $ then the arclength of $\alpha
$\ is
\[
S(u)=%
%TCIMACRO{\dint \limits_{0}^{u}}%
%BeginExpansion
{\displaystyle \int \limits_{0}^{u}}
%EndExpansion
\left \Vert \frac{\partial \alpha}{\partial u}\right \Vert du=%
%TCIMACRO{\dint \limits_{0}^{u}}%
%BeginExpansion
{\displaystyle \int \limits_{0}^{u}}
%EndExpansion
vdu.
\]
The operator $\frac{\partial}{\partial s}$\ is given with respect to\ $u$\ by
\begin{equation}
\frac{\partial}{\partial s}=\frac{1}{v}\frac{\partial}{\partial u}.\label{3.1}%
\end{equation}
Thus, the arclength is $ds=vdu$.

\begin{definition}
Any flow of the curve can be expressed following form:%
\[
\frac{\partial \alpha}{\partial t}=%
%TCIMACRO{\dsum \limits_{i=1}^{n}}%
%BeginExpansion
{\displaystyle \sum \limits_{i=1}^{n}}
%EndExpansion
f_{i}V_{i}%
\]
where $f_{i}$ denotes the $i^{th}$ scalar speed of the curve Let the arclength
variation be%
\[
S(u,t)=%
%TCIMACRO{\dint \limits_{0}^{u}}%
%BeginExpansion
{\displaystyle \int \limits_{0}^{u}}
%EndExpansion
vdu.
\]
In the Euclidean space the requirement that the curve not be subject to any
elongation or compression can be expressed by the condition
\begin{equation}
\frac{\partial}{\partial t}S(u,t)=%
%TCIMACRO{\dint \limits_{0}^{u}}%
%BeginExpansion
{\displaystyle \int \limits_{0}^{u}}
%EndExpansion
\frac{\partial v}{\partial t}du=0,\text{ \ }u\in \left[  0,l\right]
.\label{3.2}%
\end{equation}

\end{definition}

\begin{definition}
A curve evolution $\alpha(u,t)$\ and its flow $\frac{\partial \alpha}{\partial
t}$ are said to be inextensible if%
\[
\frac{\partial}{\partial t}\left \Vert \frac{\partial \alpha}{\partial
u}\right \Vert =0.
\]
Now, we research the necessary and sufficient condition for inelastic curve
flow. For this reason, we need to the following Lemma.

\begin{lemma}
Let $\frac{\partial \alpha}{\partial t}=%
%TCIMACRO{\dsum \limits_{i=1}^{n}}%
%BeginExpansion
{\displaystyle \sum \limits_{i=1}^{n}}
%EndExpansion
f_{i}V_{i}$ be a smooth flow of the curve $\alpha$. The flow is inextensible
if and only if
\begin{equation}
\frac{\partial v}{\partial t}=\frac{\partial f_{1}}{\partial u}-f_{2}%
vk_{1}.\label{3.3}%
\end{equation}

\begin{proof}
Since $\frac{\partial}{\partial u}$\ and $\frac{\partial}{\partial t}%
$\ commute and $v^{2}=\left \langle \frac{\partial \alpha}{\partial u}%
,\frac{\partial \alpha}{\partial u}\right \rangle ,$\ we have%
\begin{align*}
2v\frac{\partial v}{\partial t}  & =\frac{\partial}{\partial t}\left \langle
\frac{\partial \alpha}{\partial u},\frac{\partial \alpha}{\partial
u}\right \rangle \\
& =2\left \langle \frac{\partial \alpha}{\partial u},\frac{\partial}{\partial
u}\left(
%TCIMACRO{\dsum \limits_{i=1}^{n}}%
%BeginExpansion
{\displaystyle \sum \limits_{i=1}^{n}}
%EndExpansion
f_{i}V_{i}\right)  \right \rangle \\
& =2\left \langle vV_{1},%
%TCIMACRO{\dsum \limits_{i=1}^{n}}%
%BeginExpansion
{\displaystyle \sum \limits_{i=1}^{n}}
%EndExpansion
\frac{\partial f_{i}}{\partial u}V_{i}+%
%TCIMACRO{\dsum \limits_{i=1}^{n}}%
%BeginExpansion
{\displaystyle \sum \limits_{i=1}^{n}}
%EndExpansion
f_{i}\frac{\partial V_{i}}{\partial u}\right \rangle \\
& =2\left \langle vV_{1},\frac{\partial f_{1}}{\partial u}V_{1}+f_{1}%
\frac{\partial V_{1}}{\partial u}+...+\frac{\partial f_{n}}{\partial u}%
V_{n}+f_{n}\frac{\partial V_{n}}{\partial u}\right \rangle \\
& =2\left \langle vV_{1},\frac{\partial f_{1}}{\partial u}V_{1}+f_{1}%
vk_{1}V_{2}+...+\frac{\partial f_{n}}{\partial u}V_{n}-f_{n}vk_{n-1}%
V_{n-1}\right \rangle \\
& =2\left(  \frac{\partial f_{1}}{\partial u}-f_{2}vk_{1}\right)  .
\end{align*}
Thus, we reach%
\[
\frac{\partial v}{\partial t}=\frac{\partial f_{1}}{\partial u}-f_{2}vk_{1}.
\]

\end{proof}
\end{lemma}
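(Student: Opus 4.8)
The plan is to compute $\frac{\partial v}{\partial t}$ directly from the defining relation $v^2 = \langle \frac{\partial\alpha}{\partial u}, \frac{\partial\alpha}{\partial u}\rangle$ and to unfold everything using the Frenet formulas. First I would differentiate this relation with respect to $t$, which yields $2v\,\frac{\partial v}{\partial t} = 2\langle \frac{\partial\alpha}{\partial u}, \frac{\partial}{\partial t}\frac{\partial\alpha}{\partial u}\rangle$. The key structural fact I would invoke here is that the partial derivatives $\frac{\partial}{\partial u}$ and $\frac{\partial}{\partial t}$ commute (since $\alpha$ is a smooth function of both variables), so that $\frac{\partial}{\partial t}\frac{\partial\alpha}{\partial u} = \frac{\partial}{\partial u}\frac{\partial\alpha}{\partial t} = \frac{\partial}{\partial u}\bigl(\sum_{i=1}^{n} f_i V_i\bigr)$. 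This lets me replace the time derivative of the velocity by a spatial derivative of the prescribed flow field, which is where the scalar speeds $f_i$ enter.

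Next I would substitute $\frac{\partial\alpha}{\partial u} = vV_1$ on the left factor of the inner product and expand the right factor by the product rule into $\sum_i \frac{\partial f_i}{\partial u}V_i + \sum_i f_i \frac{\partial V_i}{\partial u}$. To handle the terms $\frac{\partial V_i}{\partial u}$ I would convert from the arclength derivative appearing in the Frenet formulas to the $u$-derivative via the relation $\frac{\partial}{\partial u} = v\,\frac{\partial}{\partial s}$ from (\ref{3.1}), so that $\frac{\partial V_i}{\partial u} = v\,V_i' = v(-k_{i-1}V_{i-1} + k_i V_{i+1})$ according to the Frenet matrix. The crucial observation that collapses the whole sum is orthonormality of the Frenet frame: in the inner product with $vV_1$, only the $V_1$-components of the expanded right-hand factor survive.

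I would then read off exactly which terms produce a $V_1$-component. The derivative-of-coefficients sum contributes only $\frac{\partial f_1}{\partial u}V_1$. Among the $f_i\frac{\partial V_i}{\partial u}$ terms, the Frenet relations give $\frac{\partial V_1}{\partial u} = vk_1 V_2$ (no $V_1$ part) and $\frac{\partial V_2}{\partial u} = v(-k_1 V_1 + k_2 V_3)$, whose only $V_1$ contribution is $-f_2 vk_1 V_1$; every other $V_i$ with $i\geq 3$ has its derivative spanned by $V_{i-1},V_{i+1}$ with indices at least $2$, hence contributes nothing to the $V_1$-direction. Pairing against $vV_1$ therefore gives $2v\,\frac{\partial v}{\partial t} = 2v\bigl(\frac{\partial f_1}{\partial u} - f_2 vk_1\bigr)$, and dividing by $2v$ yields (\ref{3.3}).

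The main obstacle is purely bookkeeping rather than conceptual: one must be careful with the index boundaries in the Frenet formulas, since $V_1' = k_1 V_2$ and $V_n' = -k_{n-1}V_{n-1}$ are special, and must correctly track that exactly two terms ($\frac{\partial f_1}{\partial u}$ from the coefficient derivatives and $-f_2 vk_1$ from the $V_2$-term) feed the $V_1$-component while all higher-order terms drop out by orthogonality. Given this, the inextensibility condition (\ref{3.2}), namely $\frac{\partial}{\partial t}S = \int_0^u \frac{\partial v}{\partial t}\,du = 0$ for all $u$, is equivalent to $\frac{\partial v}{\partial t} = 0$, which by the computed identity is precisely $\frac{\partial f_1}{\partial u} = f_2 vk_1$, establishing the stated equivalence.
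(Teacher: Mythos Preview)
Your proposal is correct and follows essentially the same route as the paper: differentiate $v^2=\langle\partial_u\alpha,\partial_u\alpha\rangle$ in $t$, commute $\partial_u$ and $\partial_t$, insert the flow expression, apply the Frenet formulas (converted via $\partial_u=v\,\partial_s$), and use orthonormality so that only the $V_1$-components survive. Your bookkeeping is in fact slightly cleaner than the paper's (you correctly track the extra factor of $v$ coming from $\langle vV_1,\cdot\rangle$, and you explain explicitly why $i\geq 3$ contributes nothing); your closing paragraph on the equivalence with inextensibility actually anticipates the content of the next theorem rather than the lemma as literally stated, but that is harmless.
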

\end{definition}

\begin{theorem}
\label{teo3.1}\bigskip Let $\left \{  V_{1},V_{2},...,V_{n}\right \}  $ be the
moving Frenet frame of the curve $\alpha$ and $\frac{\partial \alpha}{\partial
t}=%
%TCIMACRO{\dsum \limits_{i=1}^{n}}%
%BeginExpansion
{\displaystyle \sum \limits_{i=1}^{n}}
%EndExpansion
f_{i}V_{i}$\ be a differentiable flow of $\alpha$ in $E^{n\text{ }}$.Then the
flow is inextensible if and only if
\begin{equation}
\frac{\partial f_{1}}{\partial s}=f_{2}k_{1}.\label{3.4}%
\end{equation}

\end{theorem}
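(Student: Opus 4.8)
The plan is to read off the result directly from the Lemma, combine it with the definition of inextensibility, and then trade the $u$-derivative for an $s$-derivative using the operator relation (3.1). First I would recall that, by Definition, the flow is inextensible precisely when $\frac{\partial}{\partial t}\left\Vert \frac{\partial\alpha}{\partial u}\right\Vert =0$, i.e. when $\frac{\partial v}{\partial t}=0$, since $v=\left\Vert \frac{\partial\alpha}{\partial u}\right\Vert $. Substituting this into the Lemma's identity (3.3) shows that inextensibility is equivalent to
\[
\frac{\partial f_{1}}{\partial u}-f_{2}vk_{1}=0,\qquad\text{that is,}\qquad \frac{\partial f_{1}}{\partial u}=f_{2}vk_{1}.
\]

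Next I would invoke the operator relation (3.1), namely $\frac{\partial}{\partial s}=\frac{1}{v}\frac{\partial}{\partial u}$, to pass to the arclength derivative. Dividing the displayed equation by $v$ yields $\frac{\partial f_{1}}{\partial s}=\frac{1}{v}\frac{\partial f_{1}}{\partial u}=f_{2}k_{1}$, which is exactly (3.4). The equivalence holds in both directions because every step is reversible: starting from (3.4) and multiplying by $v$ recovers $\frac{\partial f_{1}}{\partial u}=f_{2}vk_{1}$, whence $\frac{\partial v}{\partial t}=0$ by (3.3), giving inextensibility.

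I expect no substantive obstacle here, since the entire analytic content — the differentiation of $v^{2}=\langle\frac{\partial\alpha}{\partial u},\frac{\partial\alpha}{\partial u}\rangle$ and the use of the Frenet formulas to kill all but the $V_{1}$-component — has already been discharged in the proof of the Lemma. The only point requiring care is the bookkeeping between the two derivatives: the factor $v$ multiplying $k_{1}$ in (3.3) is precisely what the factor $\frac{1}{v}$ in the change from $\frac{\partial}{\partial u}$ to $\frac{\partial}{\partial s}$ absorbs, so the final condition (3.4) comes out free of any speed factor. This is what makes the arclength formulation the natural one to state.
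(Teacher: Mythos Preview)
Your argument is correct and follows essentially the same route as the paper: invoke the identity (3.3) from the Lemma, set the right-hand side to zero, and use (3.1) to trade $\partial/\partial u$ for $\partial/\partial s$. The only cosmetic difference is that the paper phrases inextensibility via the arclength integral condition (3.2), $\int_{0}^{u}\frac{\partial v}{\partial t}\,du=0$ for all $u$, and then infers that the integrand $\frac{\partial f_{1}}{\partial u}-f_{2}vk_{1}$ vanishes, whereas you use the pointwise condition $\frac{\partial v}{\partial t}=0$ directly; the two are equivalent by the fundamental theorem of calculus, so nothing of substance changes.
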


\begin{proof}
Suppose that the curve flow is inextensible. From equations (\ref{3.2}) and
(\ref{3.3}) for $u\in \left[  0,l\right]  $, we see that
\[
\frac{\partial}{\partial t}S(u,t)=%
%TCIMACRO{\dint \limits_{0}^{u}}%
%BeginExpansion
{\displaystyle \int \limits_{0}^{u}}
%EndExpansion
\frac{\partial v}{\partial t}du=%
%TCIMACRO{\dint \limits_{0}^{u}}%
%BeginExpansion
{\displaystyle \int \limits_{0}^{u}}
%EndExpansion
\left(  \frac{\partial f_{1}}{\partial u}-f_{2}vk_{1}\right)  du=0.
\]
Thus, it can be see that%
\[
\frac{\partial f_{1}}{\partial u}-f_{2}vk_{1}=0.
\]
Considering the last equation and (\ref{3.1}), we reach
\[
\frac{\partial f_{1}}{\partial s}=f_{2}k_{1}.
\]
Conversely, following similar way as above, the proof is completed.

Now, we restrict ourselves to arclength parameterized curves. That is,
$v=1$\ and the local coordinate $u$\ corresponds to the curve arclength $s$.
We require the following Lemma

\begin{lemma}
Let $\left \{  V_{1},V_{2},...,V_{n}\right \}  $\ be the moving Frenet frame of
the curve $\alpha$. Then, the differentions of $\left \{  V_{1},V_{2}%
,...,V_{n}\right \}  $\ with respect to $t$\ is%
\begin{align*}
\frac{\partial V_{1}}{\partial t}  & =\left[
%TCIMACRO{\dsum \limits_{i=2}^{n-1}}%
%BeginExpansion
{\displaystyle \sum \limits_{i=2}^{n-1}}
%EndExpansion
\left(  f_{i-1}k_{i-1}+\frac{\partial f_{i}}{\partial s}-f_{i+1}k_{i}\right)
V_{i}\right]  +\left(  f_{n-1}k_{n-1}+\frac{\partial f_{n}}{\partial
s}\right)  V_{n},\\
\frac{\partial V_{j}}{\partial t}  & =-\left(  f_{j-1}k_{j-1}+\frac{\partial
f_{j}}{\partial s}-f_{j+1}k_{j}\right)  V_{1}+\left[
%TCIMACRO{\dsum \limits_{\substack{k=2 \\k\neq i}}^{n}}%
%BeginExpansion
{\displaystyle \sum \limits_{\substack{k=2 \\k\neq i}}^{n}}
%EndExpansion
\Psi_{kj}V_{k}\right]  ,\text{ \ }1<j<n,\\
\frac{\partial V_{n}}{\partial t}  & =-\left(  f_{n-1}k_{n-1}+\frac{\partial
f_{n}}{\partial s}\right)  V_{1}+\left[
%TCIMACRO{\dsum \limits_{k=2}^{n-1}}%
%BeginExpansion
{\displaystyle \sum \limits_{k=2}^{n-1}}
%EndExpansion
\Psi_{kn}V_{k}\right]  ,
\end{align*}
where $\Psi_{kj}=\left \langle \frac{\partial V_{j}}{\partial t},V_{k}%
\right \rangle $\ and $\Psi_{kn}=\left \langle \frac{\partial V_{n}}{\partial
t},V_{k}\right \rangle $.

\begin{proof}
For $\frac{\partial}{\partial t}$\ and $\frac{\partial}{\partial s}$\ commute,
it seen that
\begin{align*}
\frac{\partial V_{1}}{\partial t}  & =\frac{\partial}{\partial t}\left(
\frac{\partial \alpha}{\partial s}\right)  =\frac{\partial}{\partial s}\left(
\frac{\partial \alpha}{\partial t}\right)  =\frac{\partial}{\partial s}\left(
%TCIMACRO{\dsum \limits_{i=1}^{n}}%
%BeginExpansion
{\displaystyle \sum \limits_{i=1}^{n}}
%EndExpansion
f_{i}V_{i}\right)  =%
%TCIMACRO{\dsum \limits_{i=1}^{n}}%
%BeginExpansion
{\displaystyle \sum \limits_{i=1}^{n}}
%EndExpansion
\frac{\partial f_{i}}{\partial s}V_{i}+%
%TCIMACRO{\dsum \limits_{i=1}^{n}}%
%BeginExpansion
{\displaystyle \sum \limits_{i=1}^{n}}
%EndExpansion
f_{i}\frac{\partial V_{i}}{\partial s}\\
& =\frac{\partial f_{1}}{\partial s}V_{1}+f_{1}\frac{\partial V_{1}}{\partial
s}+\frac{\partial f_{2}}{\partial s}V_{2}+f_{2}\frac{\partial V_{2}}{\partial
s}+...+\frac{\partial f_{n}}{\partial s}V_{n}+f_{n}\frac{\partial V_{n}%
}{\partial s}\\
& =\frac{\partial f_{1}}{\partial s}V_{1}+f_{1}k_{1}V_{2}+\frac{\partial
f_{2}}{\partial s}V_{2}+f_{2}\left(  -k_{1}V_{1}+k_{2}V_{3}\right)
+...+\frac{\partial f_{n}}{\partial s}V_{n}-f_{n}k_{n-1}V_{n-1}.
\end{align*}
Substituting the equation (\ref{3.4}) into the last equation and using Theorem
\ref{teo3.1}., we have%
\[
\frac{\partial V_{1}}{\partial t}=\left[
%TCIMACRO{\dsum \limits_{i=2}^{n-1}}%
%BeginExpansion
{\displaystyle \sum \limits_{i=2}^{n-1}}
%EndExpansion
\left(  f_{i-1}k_{i-1}+\frac{\partial f_{i}}{\partial s}-f_{i+1}k_{i}\right)
V_{i}\right]  +\left(  f_{n-1}k_{n-1}+\frac{\partial f_{n}}{\partial
s}\right)  V_{n}.
\]
Now, let us differentiate the Frenet frame with respect to $t$ for $1<j<n$\ as
follows;
\begin{align}
0  & =\frac{\partial}{\partial t}\left \langle V_{1},V_{j}\right \rangle
=\left \langle \frac{\partial V_{1}}{\partial t},V_{j}\right \rangle
+\left \langle V_{1},\frac{\partial V_{j}}{\partial t}\right \rangle \nonumber \\
& =\left(  f_{i-1}k_{i-1}+\frac{\partial f_{i}}{\partial s}-f_{i+1}%
k_{i}\right)  +\left \langle V_{1},\frac{\partial V_{j}}{\partial
t}\right \rangle .\label{3.5}%
\end{align}
From (\ref{3.5}), we have obtain
\[
\frac{\partial V_{j}}{\partial t}=-\left(  f_{j-1}k_{j-1}+\frac{\partial
f_{j}}{\partial s}-f_{j+1}k_{j}\right)  V_{1}+\left[
%TCIMACRO{\dsum \limits_{\substack{k=2 \\k\neq j}}^{n}}%
%BeginExpansion
{\displaystyle \sum \limits_{\substack{k=2 \\k\neq j}}^{n}}
%EndExpansion
\Psi_{kj}V_{k}\right]  .
\]
Lastly, considering $\left \langle V_{1},V_{n}\right \rangle =0$\ and following
similar way as above, we reach%
\[
\frac{\partial V_{n}}{\partial t}=-\left(  f_{n-1}k_{n-1}+\frac{\partial
f_{n}}{\partial s}\right)  V_{1}+\left[
%TCIMACRO{\dsum \limits_{k=2}^{n-1}}%
%BeginExpansion
{\displaystyle \sum \limits_{k=2}^{n-1}}
%EndExpansion
\Psi_{kn}V_{k}\right]  .
\]

\begin{theorem}
Suppose that the curve flow $\frac{\partial \alpha}{\partial t}=%
%TCIMACRO{\dsum \limits_{i=1}^{n}}%
%BeginExpansion
{\displaystyle \sum \limits_{i=1}^{n}}
%EndExpansion
f_{i}V_{i}$\ is inextensible. Then the following system of partial
differential equations holds:%
\begin{align*}
\frac{\partial k_{1}}{\partial t}  & =f_{2}k_{1}^{2}+f_{1}\frac{\partial
k_{1}}{\partial s}+\frac{\partial^{2}f_{2}}{\partial s^{2}}-2\frac{\partial
f_{2}}{\partial s}k_{2}-f_{3}\frac{\partial k_{2}}{\partial s}-f_{2}k_{2}%
^{2}-f_{4}k_{3}k_{2}\\
\frac{\partial k_{i-1}}{\partial t}  & =-\frac{\partial \Psi_{(i-1)i}}{\partial
s}-\Psi_{(i-2)i}k_{i-2}\\
\frac{\partial k_{i}}{\partial t}  & =\frac{\partial \Psi_{(i-1)i}}{\partial
s}-\Psi_{(i+2)i}k_{i+2}\\
\frac{\partial k_{n-1}}{\partial t}  & =-\frac{\partial \Psi_{(n-1)n}}{\partial
s}-\Psi_{(n-2)n}k_{n-2}.
\end{align*}

\end{theorem}
\end{proof}
\end{lemma}
\end{proof}

\begin{proof}
\bigskip Since $\frac{\partial}{\partial s}\frac{\partial V_{1}}{\partial
t}=\frac{\partial}{\partial t}\frac{\partial V_{1}}{\partial s}$, we get%
\begin{align*}
\frac{\partial}{\partial s}\frac{\partial V_{1}}{\partial t}  & =\frac
{\partial}{\partial s}\left[
%TCIMACRO{\dsum \limits_{i=2}^{n-1}}%
%BeginExpansion
{\displaystyle \sum \limits_{i=2}^{n-1}}
%EndExpansion
\left(  f_{i-1}k_{i-1}+\frac{\partial f_{i}}{\partial s}-f_{i+1}k_{i}\right)
V_{i}+\left(  f_{n-1}k_{n-1}+\frac{\partial f_{n}}{\partial s}\right)
V_{n}\right] \\
& =%
%TCIMACRO{\dsum \limits_{i=2}^{n-1}}%
%BeginExpansion
{\displaystyle \sum \limits_{i=2}^{n-1}}
%EndExpansion
\left[  \left(  \frac{\partial f_{i-1}}{\partial s}k_{i-1}+f_{i-1}%
\frac{\partial k_{i-1}}{\partial s}+\frac{\partial^{2}f_{i}}{\partial s^{2}%
}-\frac{\partial f_{i+1}}{\partial s}k_{i}-f_{i+1}\frac{\partial k_{i}%
}{\partial s}\right)  V_{i}\right] \\
& +%
%TCIMACRO{\dsum \limits_{i=2}^{n-1}}%
%BeginExpansion
{\displaystyle \sum \limits_{i=2}^{n-1}}
%EndExpansion
\left[  \left(  f_{i-1}k_{i-1}+\frac{\partial f_{i}}{\partial s}-f_{i+1}%
k_{i}\right)  \frac{\partial V_{i}}{\partial s}\right] \\
& +\left(  \frac{\partial f_{n-1}}{\partial s}k_{n-1}+f_{n-1}\frac{\partial
k_{n-1}}{\partial s}+\frac{\partial^{2}f_{n}}{\partial s^{2}}\right)
V_{n}+\left(  f_{n-1}k_{n-1}+\frac{\partial f_{n}}{\partial s}\right)
\frac{\partial V_{n}}{\partial s}%
\end{align*}
while
\[
\frac{\partial}{\partial t}\frac{\partial V_{1}}{\partial s}=\frac{\partial
}{\partial t}\left(  k_{1}V_{2}\right)  =\frac{\partial k_{1}}{\partial
t}V_{2}+k_{1}\frac{\partial V_{2}}{\partial t}.
\]
Thus, from the both of above two equations, we reach%
\[
\frac{\partial k_{1}}{\partial t}=f_{2}k_{1}^{2}+f_{1}\frac{\partial k_{1}%
}{\partial s}+\frac{\partial^{2}f_{2}}{\partial s^{2}}-2\frac{\partial f_{3}%
}{\partial s}k_{2}-f_{3}\frac{\partial k_{2}}{\partial s}-f_{2}k_{2}^{2}%
-f_{4}k_{3}k_{2}.
\]
For $1<i<n$, noting that $\frac{\partial}{\partial s}\frac{\partial V_{i}%
}{\partial t}=\frac{\partial}{\partial t}\frac{\partial V_{i}}{\partial s}$,
it is seen that%
\begin{align*}
\frac{\partial}{\partial s}\frac{\partial V_{i}}{\partial t}  & =\frac
{\partial}{\partial s}\left[  -\left(  f_{i-1}k_{i-1}+\frac{\partial f_{i}%
}{\partial s}-f_{i+1}k_{i}\right)  V_{1}+%
%TCIMACRO{\dsum \limits_{k=2}^{n}}%
%BeginExpansion
{\displaystyle \sum \limits_{k=2}^{n}}
%EndExpansion
\Psi_{kj}V_{k}\right] \\
& =-\left(  \frac{\partial f_{i-1}}{\partial s}k_{i-1}+f_{i-1}\frac{\partial
k_{i-1}}{\partial s}+\frac{\partial^{2}f_{i}}{\partial s^{2}}-\frac{\partial
f_{i+1}}{\partial s}k_{i}-f_{i+1}\frac{\partial k_{i}}{\partial s}\right)
V_{1}\\
& +\left(  f_{i-1}k_{i-1}+\frac{\partial f_{i}}{\partial s}-f_{i+1}%
k_{i}\right)  \frac{\partial V_{1}}{\partial s}+%
%TCIMACRO{\dsum \limits_{\substack{k=2 \\k\neq i}}^{n}}%
%BeginExpansion
{\displaystyle \sum \limits_{\substack{k=2 \\k\neq i}}^{n}}
%EndExpansion
\left(  \frac{\partial \Psi_{ki}}{\partial s}V_{k}+\Psi_{ki}\frac{\partial
V_{k}}{\partial s}\right)
\end{align*}
while%
\[
\frac{\partial}{\partial t}\frac{\partial V_{i}}{\partial s}=\frac{\partial
}{\partial t}\left(  -k_{i-1}V_{i-1}+k_{i}V_{i+1}\right)  =-\frac{\partial
k_{i-1}}{\partial t}V_{i-1}-k_{i-1}\frac{\partial V_{i-1}}{\partial t}%
+\frac{\partial k_{i}}{\partial t}V_{i+1}+k_{i}\frac{\partial V_{i+1}%
}{\partial t}.
\]
Thus, we obtain
\[
\frac{\partial k_{i-1}}{\partial t}=-\frac{\partial \Psi_{(i-1)i}}{\partial
s}-\Psi_{(i-2)i}k_{i-2}%
\]
and%
\[
\frac{\partial k_{i}}{\partial t}=\frac{\partial \Psi_{(i+1)i}}{\partial
s}-\Psi_{(i+2)i}k_{i+1}.
\]
Lastly, considering $\frac{\partial}{\partial s}\frac{\partial V_{n}}{\partial
t}=\frac{\partial}{\partial t}\frac{\partial V_{n}}{\partial s}$\ and
following similar way as above, we reach\bigskip%
\[
\frac{\partial k_{n-1}}{\partial t}=-\frac{\partial \Psi_{(n-1)n}}{\partial
s}-\Psi_{(n-2)n}k_{n-2}.
\]

\end{proof}

\end{document}